\documentclass{proc-l}
\usepackage{amsmath}
\usepackage{amssymb}
\usepackage{amsthm}
\usepackage{graphicx}

\newtheorem{definition}{Definition}
\newtheorem{theorem}{Theorem}
\newtheorem{lemma}{Lemma}
\newtheorem{corollary}{Corollary}

\begin{document}
\title{SO(2)-Congruent Projections of Convex Bodies with Rotation About the Origin}

\author{Benjamin Mackey}
\thanks{This research as supported in part by the NSF Grant, DMS-1101636. I would also like to thank Dmitry Ryabogin for helping me organize and revise this paper.}
\address{Department of Mathematics, Michigan State University, East Lansing, MI, 48824.}
\email{mackeybe@msu.edu}

\begin{abstract}
We prove that if two convex bodies $ K, L \subset \mathbb{R}^3$ satisfy the property that the orthogonal projections of $K$ and $L$ onto every plane containing the origin are roations of each other, then either $K$ and $L$ coincide or $L$ is the image of $K$ under a reflection about the origin.
\end{abstract}

\subjclass[2010]{52A15}

\date{}

\maketitle

\section{Introduction}

In this paper, we will prove the following theorem:

\begin{theorem} \label{thm: Main}

Let $K,L \subset \mathbb{R}^3$ be convex bodies containing the origin as an interior point such that for every $\xi \in S^2$, the projection $K_{|\xi^{\perp}}$ can be rotated about the origin into $L_{|\xi^{\perp}}$. Then either $K=L$ or $K$ can be obtained by reflecting $L$ about the origin.

\end{theorem}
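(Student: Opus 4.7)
The plan is to first extract enough rotation-invariant data from the projections to reduce the problem to a pointwise dichotomy on the support functions of $K$ and $L$, and then to use the SO(2)-congruence itself to rule out the mixed case.

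For any continuous $F : \mathbb{R} \to \mathbb{R}$ and any planar convex body $C$, the integral $\int_{S^1} F(h_C) \, d\theta$ is invariant under rotations of $C$ about the origin. Applied to $C = K|\xi^\perp$ and $C = L|\xi^\perp$, the hypothesis gives
\[
\int_{S^1 \cap \xi^\perp} F(h_K) \, d\theta \;=\; \int_{S^1 \cap \xi^\perp} F(h_L) \, d\theta
\]
for every $\xi \in S^2$, so the Funk (spherical Radon) transform of $F(h_K) - F(h_L)$ vanishes identically. By injectivity of the Funk transform on continuous even functions, $F(h_K) - F(h_L)$ must be odd on $S^2$. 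Since this holds for every $F$, the unordered pairs $\{h_K(u), h_K(-u)\}$ and $\{h_L(u), h_L(-u)\}$ coincide at every $u \in S^2$. Decomposing each support function as $h = e + o$ into its even and odd parts, this yields $e_K \equiv e_L$ (equivalently $K + (-K) = L + (-L)$) together with the pointwise dichotomy $o_L(u) = \pm o_K(u)$.

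It remains to show that the sign in this dichotomy is globally constant: the $+$ case gives $K = L$ and the $-$ case gives $K = -L$. Let $U_\pm = \{o_L = \pm o_K\}$; these are closed, antipodally symmetric, cover $S^2$, and intersect only on $\{o_K = 0\}$. The main obstacle — and where I expect the bulk of the work to lie — is to exclude the case where both $U_+$ and $U_-$ are proper. For this I would re-use the rotation data itself rather than only its invariants: the identity $h_L(u) = h_K(R_{\alpha(\xi)} u)$ on $\xi^\perp$ combined with the dichotomy shows that $R_{\alpha(\xi)}$ sends every $u \in \xi^\perp \cap S^2$ into a direction on which $h_K$ agrees with $h_K(u)$ or $h_K(-u)$. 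A direct case analysis then shows that whenever $\alpha(\xi) \notin \{0, \pi\}$ the projection $K|\xi^\perp$ must possess a nontrivial rotational symmetry (either a cyclic $q$-fold symmetry or full rotational symmetry). After separating off such highly symmetric bodies (or reducing to them by approximation adapted to the SO(2)-congruence hypothesis), the remaining case has $\alpha(\xi) \in \{0, \pi\}$ for every $\xi$; continuity of a suitable selection of $\alpha$, together with its oddness $\alpha(-\xi) = -\alpha(\xi) \pmod{2\pi}$ and the connectedness of $S^2$, forces $\alpha$ to be constant, yielding the desired dichotomy.
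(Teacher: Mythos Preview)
Your Funk-transform step is correct and is actually sharper than what the paper extracts: using every continuous $F$ at once to conclude the pointwise equality $\{h_K(u),h_K(-u)\}=\{h_L(u),h_L(-u)\}$ subsumes the paper's two separate invariants $width_K=width_L$ and $\tau_{K^*}=\tau_{L^*}$ (the latter is just your $F(x)=x^{-2}$ case written on the dual body).

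The gap is entirely in the second half, which is where the real difficulty of the problem sits. Your claim that $\alpha(\xi)\notin\{0,\pi\}$ forces a rotational symmetry of $K_{|\xi^\perp}$ does not follow from what you have: combining $h_L=h_K\circ R_\alpha$ on $\xi^\perp$ with the pointwise dichotomy gives only that $e_K$ and $|o_K|$ are $R_\alpha$-invariant on $\xi^\perp\cap S^2$, not that $h_K$ itself is; constant-width bodies with no nontrivial rotational symmetry exist, so this case cannot be dismissed, and the sign $o_K(R_\alpha u)/o_K(u)\in\{\pm1\}$ is only locally constant on $\{o_K\neq0\}$. The fallback proposals to ``separate off'' symmetric bodies or to approximate are too vague to evaluate, and approximation is especially problematic because the hypothesis is a joint condition on $(K,L)$ with an unspecified rotation for each $\xi$, with no evident perturbation preserving it. Finally, ``continuity of a suitable selection of $\alpha$'' is asserted rather than proved, and the oddness $\alpha(-\xi)\equiv-\alpha(\xi)$ gives no information once $\alpha\in\{0,\pi\}$; in fact the passage from $S^2=F_0\cup F_1$ to a single $F_i=S^2$ is itself a nontrivial result that the paper imports from Golubyatnikov. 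The paper's replacement for all of this is concrete and quite different: a Baire-category and geometric ``cross-marks'' argument shows that any $\xi\notin F_0\cup F_1$ must have both $width_K$ and $\tau_{K^*}$ constant on $\xi^\perp$, and then the system $h_K(\theta)+h_K(-\theta)=a$, $h_K(\theta)^{-2}+h_K(-\theta)^{-2}=b$ is solved as a quartic in $h_K(\theta)$ with finitely many roots, forcing $h_K$ constant on $\xi^\perp$ and hence $\xi\in F_0$.
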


Several related results have been proven under various assumptions about the type of congruence the projections satisfy. Wilhelm S\"uss proved that if each projection $K_{|\xi^{\perp}}$ is some parallel translation of $L_{|\xi^{\perp}}$, then $K$ and $L$ are parallel (\cite{Golubyatnikov} page 8). Vladimir Golubyatnikov allowed for both shifts and rotations, and proved two different theorems with variations on the symmetry of the projections (\cite{Golubyatnikov} page 13) and smoothness of the bodies (\cite{Golubyatnikov} page 22). This new result differs in that no symmetry assumptions are made about the bodies, but the freedom to translate projections is lost.

\section{Notation and Definitions}

Throughout this paper, $\mathbb{R}^n$ will refer to the $n$-dimensional Euclidean space, and $S^{n-1} = \{\xi \in \mathbb{R}^n : |x| =1 \}$ will denote the unit sphere. The Euclidean inner product of two vectors $x,y \in \mathbb{R}^n$ will be denoted by $x \cdot y.$ For a unit vector $\xi \in S^{n-1}$, the hyperplane orthogonal to $\xi$ is denoted by $\xi^{\perp}= \{ x \in \mathbb{R}^n : x \perp \xi \}$. The set $\xi^{\perp} \cap S^2$ is the great circle of unit vectors orthogonal to $\xi$. If $K \subset \mathbb{R}^3$ is a convex body containing the origin and $\xi \in S^2$, the section of $K$ orthogonal to $\xi$ is the set $K \cap \xi^{\perp}$. Given $E \subset S^2$ endowed with the spherical metric,  the interior of $E$ will be denoted by $\text{int}(E)$ and the closure of $E$ will be denoted by $\overline{E}$.

Let $\xi \in S^{2}$, $r$ be some nonnegative number, and let $x \in \mathbb{R}^3$. Then the image of $x$ rotated by an angle of $r\pi$ about the linear subspace spanned by $\xi$ will be denoted by $R_{\xi,r}(x)$. Given $\epsilon > 0,$ the spherical disk of radius $\epsilon \pi$ centered at $\xi$ in $S^2$ will be called $S(\xi, \epsilon)$. We recall some standard concepts in the study of convexity:

\begin{definition}
Let $\xi \in S^{n-1}$ and $K \subset \mathbb{R}^n$ be a convex body. The \textbf{orthogonal projection} of $K$ in the direction $\xi$ is the set $K_{|\xi^{\perp}} = \{y \in \xi^{\perp}: \exists \lambda \in \mathbb{R}, y+ \lambda \xi \in K\}$.
\end{definition}

\begin{definition}
Let $K \subset \mathbb{R}^n$ be a convex body. The \textbf{support function} of $K$ is the map $h_K: S^{n-1} \mapsto \mathbb{R}$ defined by $h_K(\xi)=\max\{u \cdot \xi: u \in K\}$. The \textbf{width function} of $K$ is defined by $width_K(\xi)=(h_K(\xi)+h_K(-\xi))/2$. A body $K$ has \textbf{constant width} if $width_K$ is a constant function on $S^{n-1}$.
\end{definition}

\begin{definition}
If $K \subset \mathbb{R}^n$ is a convex body, the \textbf{polar dual} of $K$ is the set $K^* = \{  x \in \mathbb{R}^n : x \cdot y \leq 1, \forall  y \in K \}$.
\end{definition}

\begin{definition}
Let $K \subset \mathbb{R}^n$ be a convex body containing the origin. The \textbf{radial function} $\rho_K: S^{n-1} \mapsto \mathbb{R}$ is defined by $\rho_K(\xi) = \max \{ \lambda \in \mathbb{R}: \lambda\xi \in K \}$.
\end{definition}

\section{Auxillary Results}

For any $r \in \mathbb{R},$ define $F_r \subset S^2$ by ${F_r}= \{ \xi \in S^2: K_{|\xi^{\perp}}$ rotated by $r\pi$ is $L_{|\xi^{\perp}}\}$. Observe that if a rotation of magnitude between $\pi$ and $2\pi$ in the clockwise direction is necessary for $K_{|\xi^{\perp}}$ to coincide with $L_{|\xi^{\perp}}$, then a rotation of less than $\pi$ in the counterclockwise direction makes the projections coincide. Thus, only $r \in [0,1]$ need be considered. It follows that $\xi \in F_0$ if and only if $K_{|\xi^{\perp}}=L_{|\xi^{\perp}}$, and $\xi \in F_1$ if and only if the image of $K_{|\xi^{\perp}}$ under a reflection about the origin is $L_{|\xi^{\perp}}$. The conclusion of Theorem \ref{thm: Main} can be rewritten as ``either $S^2=F_0$ or $S^2=F_1$."

\begin{lemma} \label{lem: closed}

For all $r \in [0,1]$, the set ${F_r}$ is closed.

\end{lemma}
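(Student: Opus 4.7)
The plan is to prove closedness by a direct sequential argument: take an arbitrary sequence $\xi_n \in F_r$ with $\xi_n \to \xi$ in $S^2$, and show $\xi \in F_r$. Concretely, I need to verify that the limiting rotation $R_{\xi, r}$ carries $K_{|\xi^{\perp}}$ onto $L_{|\xi^{\perp}}$, given that the analogous property holds along the sequence.

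The argument rests on two continuity facts, both standard. First, the map $\eta \mapsto M_{|\eta^{\perp}}$ from $S^2$ into the space of convex bodies (with the Hausdorff metric, thinking of each projection as a subset of $\mathbb{R}^3$) is continuous for any fixed convex body $M$; applied to $K$ and $L$, this gives $K_{|\xi_n^{\perp}} \to K_{|\xi^{\perp}}$ and $L_{|\xi_n^{\perp}} \to L_{|\xi^{\perp}}$. Second, the rotation $R_{\eta, r}$, viewed as an element of $SO(3)$ via the axis-angle parametrization, depends continuously on $\eta$ for fixed $r$, so $R_{\xi_n, r} \to R_{\xi, r}$ in operator norm.

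Combining these, I use the elementary fact that if $A_n \to A$ in $SO(3)$ and $B_n \to B$ in Hausdorff metric (with all $B_n$ contained in a fixed compact set, which here is automatic since $K$ is bounded), then $A_n(B_n) \to A(B)$ in Hausdorff metric. Applying this with $A_n = R_{\xi_n, r}$ and $B_n = K_{|\xi_n^{\perp}}$ yields
\[
R_{\xi_n, r}\bigl(K_{|\xi_n^{\perp}}\bigr) \longrightarrow R_{\xi, r}\bigl(K_{|\xi^{\perp}}\bigr).
\]
On the other hand, by the defining property of $F_r$, the left-hand sequence equals $L_{|\xi_n^{\perp}}$, which converges to $L_{|\xi^{\perp}}$. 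Uniqueness of limits in the Hausdorff metric then forces $R_{\xi, r}(K_{|\xi^{\perp}}) = L_{|\xi^{\perp}}$, i.e.\ $\xi \in F_r$.

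I do not anticipate a genuine obstacle here; the only mild care needed is to make sure all objects live naturally in $\mathbb{R}^3$ (so that Hausdorff convergence of sets lying in different planes $\xi_n^{\perp}$ is well-defined) and to cite or briefly justify the joint continuity $(A, B) \mapsto A(B)$. Both are routine, so the lemma should follow in a few lines.
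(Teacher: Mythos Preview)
Your proposal is correct and follows essentially the same sequential-closure strategy as the paper: take $\xi_n\to\xi$ with $\xi_n\in F_r$ and pass to the limit using continuity of the rotation and of the projection data. The only cosmetic difference is that the paper phrases the continuity pointwise via support functions (using Rodrigues' formula to track $R_{\xi_n,r}(\theta_n)\to R_{\xi,r}(\theta)$), whereas you package the same continuity globally via the Hausdorff metric; these are equivalent formulations of the same argument.
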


\begin{proof}

If $F_r$ is empty, then it is trivially closed. If $F_r$ is nonempty, let $\xi_n$ be a sequence in $F_r$, and suppose $\xi_n$ converges to $\xi \in S^2.$ Let $\theta \in \xi^{\perp}$ be arbitrary. For each $n$, pick some $\theta_n \in \xi_n^{\perp}$ so that $\theta_n$ converges to $\theta.$ Since each $\xi_n \in F_r$, we have $h_{L}(R_{\xi_n,r}(\theta_n))=h_{K}(\theta_n)$ for each $n$.

By Rodrigues' rotation formula (\cite{Koks} page 147), 
$$R_{\xi_n,r}(\theta_n)= \theta_n \cos(r\pi)+(\xi_n \times \theta_n)\sin(r\pi)+ \xi_n(\xi_n \cdot \theta_n)(1-\cos(r\pi)).$$ Taking the limit as $n$ approaches infinity, we see that $R_{\xi_n, r}(\theta_n)$ converges to
$$\theta\cos(r\pi) + (\xi \times \theta)\sin(r\pi)+\xi (\xi \cdot \theta)(1-\cos(r\pi))=R_{\xi,r}(\theta).$$

By the continuity of $h_K$ and $h_L$, the function $h_L(R_{\xi_n,r}(\theta_n))$ converges to $h_L(R_{\xi,r}(\theta))$ and $h_K(\theta_n)$ converges to $h_K(\theta).$ It follows that $h_L(R_{\xi,r}(\theta)) = h_K(\theta)$ for every $\theta \in \xi^{\perp}$. This means that $K_{|\xi^{\perp}}$ rotated by $r\pi$ coincides with $L_{|\xi^{\perp}}$, and so $\xi \in F_r$.

\end{proof}

Two-dimensional bodies of constant width play an important role in our analysis. Define the set $\Sigma \subset S^2$ by $ \Sigma =\{ \xi \in S^2: K_{|\xi^{\perp}}$ has constant width$\}$. If $\xi_1,\xi_2 \in \Sigma$, then $\xi_1^{\perp} \cap S^2$ and $\xi_2^{\perp} \cap S^2$ must intersect, which implies that $K_{|\xi_1^{\perp}}$ and $K_{|\xi_2^{\perp}}$ must have the same width, which will be denoted $M$.

\begin{lemma} \label{lem: Sigma}

$\Sigma$ is closed.

\end{lemma}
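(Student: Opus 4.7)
The plan is to pick a sequence $\xi_n \in \Sigma$ converging to $\xi \in S^2$ and show $\xi \in \Sigma$. If $\Sigma$ has at most one element, closedness is immediate, so I may assume $\Sigma$ contains two or more points; by the remark preceding the lemma there is then a common value $M$ such that $K_{|\eta^\perp}$ has constant width $M$ for every $\eta \in \Sigma$, and the goal becomes proving that $K_{|\xi^\perp}$ also has constant width $M$.

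The key ingredient is the elementary identity $h_{K_{|\eta^\perp}}(\omega) = h_K(\omega)$ valid for every $\eta \in S^2$ and every $\omega \in \eta^\perp$, which follows because inner products with $\omega$ ignore $\eta$-components. Using this, the constant-width condition at each $\xi_n$ translates into the scalar identity $h_K(\theta_n) + h_K(-\theta_n) = 2M$ for every $\theta_n \in \xi_n^\perp \cap S^2$. For an arbitrary fixed $\theta \in \xi^\perp \cap S^2$, I would construct an approximating sequence $\theta_n \in \xi_n^\perp \cap S^2$ with $\theta_n \to \theta$, for instance by normalizing $\theta - (\theta \cdot \xi_n)\xi_n$, which is well defined for large $n$ because $\theta \cdot \xi = 0$ and $\xi_n \to \xi$. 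Continuity of $h_K$ then propagates $h_K(\theta_n) + h_K(-\theta_n) = 2M$ to $h_K(\theta) + h_K(-\theta) = 2M$, and since $\theta$ was arbitrary, $K_{|\xi^\perp}$ has constant width $M$ and $\xi \in \Sigma$.

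No step presents a genuine obstacle; the argument runs parallel to the proof of Lemma \ref{lem: closed}, with Rodrigues' rotation formula replaced by the simpler support-function identity for projections. The only technical care needed is in constructing the approximating directions $\theta_n$, which the orthogonal-projection construction handles cleanly.
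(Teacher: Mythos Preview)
Your proposal is correct and is essentially the same argument as the paper's: take a convergent sequence $\xi_n\to\xi$ in $\Sigma$, approximate an arbitrary $\theta\in\xi^\perp\cap S^2$ by $\theta_n\in\xi_n^\perp\cap S^2$, and pass to the limit using continuity of the width (equivalently, support) function. Your version is simply more explicit about the construction of $\theta_n$ and the reduction to $h_K$, but the route is identical.
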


\begin{proof}

Let $\{ \xi_n \}_{n=1}^{\infty} \subset \Sigma$ be a sequence such that $\xi_n$ converges to $\xi \in S^2$, and let $\theta \in \xi^{\perp} \cap S^2$ be arbitrary. For each $n,$ there is some $\theta_n \in \xi_n^{\perp}$ so that $\theta_n$ converges to $\theta.$ Since the width function is continuous, $width_K(\theta_n)$ converges to $width_K(\theta)$, but $width_K(\theta_n)=M$ for all $M$. Therefore, $width_K(\theta)=M$ for all $\theta \in \xi^{\perp}\cap S^2.$

\end{proof}

The next lemma is used during the proof of the central lemma in the next section.

\begin{lemma} \label{lem: Baire}
Let $\theta \in S^2$ and $\epsilon > 0$, and suppose there exists a countable collection of closed subsets \{$F_n\}$ of $S^2$ with $\overline{S(\theta, \epsilon)}= \cup_{n=1}^{\infty}F_n$. Then there exists some $n \in \mathbb{N}$ where $\text{int}(F_n)$ is nonempty.
\end{lemma}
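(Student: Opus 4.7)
The plan is to apply the Baire Category Theorem to the closed spherical disk $\overline{S(\theta,\epsilon)}$. As a closed subset of the compact metric space $S^2$, this disk is itself a complete metric space under the induced spherical metric. Since each $F_n$ is closed in $S^2$, the intersections $F_n \cap \overline{S(\theta,\epsilon)}$ are closed in $\overline{S(\theta,\epsilon)}$, and by hypothesis their union is all of $\overline{S(\theta,\epsilon)}$. Baire's theorem then produces an index $n_0$, a point $p \in F_{n_0}$, and some $\delta>0$ such that $S(p,\delta)\cap \overline{S(\theta,\epsilon)} \subset F_{n_0}$.

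The only remaining step is to upgrade this relative interior point to an honest interior point in $S^2$. If $p$ lies in the open disk $S(\theta,\epsilon)$, I would simply shrink $\delta$ so that $S(p,\delta)\subset S(\theta,\epsilon)$, in which case the entire spherical disk $S(p,\delta)$ sits inside $F_{n_0}$ and we are done. If instead $p$ lies on the boundary circle $\partial S(\theta,\epsilon)$, then the set $S(p,\delta)\cap S(\theta,\epsilon)$ is still open and nonempty in $S^2$ and is contained in $F_{n_0}$. In either case, $\text{int}(F_{n_0})$ is nonempty.

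There is no serious obstacle here; the statement is essentially a restatement of Baire's theorem for the closed spherical disk, and the only mild subtlety is the distinction between interior relative to $\overline{S(\theta,\epsilon)}$ and interior in $S^2$, which is dispatched by the short case analysis above.
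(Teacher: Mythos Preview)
Your proof is correct and follows essentially the same approach as the paper: apply the Baire Category Theorem to the compact (hence complete) metric space $\overline{S(\theta,\epsilon)}$ to find some $F_{n_0}$ with nonempty interior. You are in fact more careful than the paper in explicitly distinguishing interior relative to $\overline{S(\theta,\epsilon)}$ from interior in $S^2$ and handling the boundary case, whereas the paper's proof simply asserts the conclusion directly from Baire.
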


\begin{proof}

The set $\overline{S(\theta, \epsilon)}$ is compact in $S^2$ and therefore is a complete metric space. The Baire category theorem (see for example \cite{Rudin} page 98) implies that if $\overline{S(\theta, \epsilon)}= \cup_{n=1}^{\infty}F_n$, there is some $n \in \mathbb{N}$ with $\text{int}( \overline{F_n})=\text{int}(F_n) \neq \emptyset$.

\end{proof}

\section{Main Results}

Our strategy in this section will be to combine information about the original bodies and the corresponding dual bodies to reduce the problem to a statement about a system of equations. Solving this system will then reduce further to solving a quartic polynomial equation in one variable. Using this, we will see that any projection which can be rotated by an amount between 0 and $\pi$ into the other projection must be a disk. What this shows is that there actually are no rotations except for the zero rotation and a reflection about the origin.

The following lemma is due to Golubyatnikov (\cite{Golubyatnikov} page 17), and is essential to the proof of the main theorem of this paper.

\begin{lemma} \label{lem: Gol}
If the projections of two convex bodies $K,L \subset \mathbb{R}^3$ are all SO(2) congruent, then $S^2= F_0 \cup F_1 \cup \Sigma$.
\end{lemma}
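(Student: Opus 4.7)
My plan is to argue by contradiction using Baire category. Suppose some $\xi_0 \in S^2$ lies outside $F_0 \cup F_1 \cup \Sigma$. Since $F_0, F_1$ are closed by Lemma \ref{lem: closed} and $\Sigma$ is closed by Lemma \ref{lem: Sigma}, I can choose $\epsilon > 0$ so small that $\overline{S(\xi_0, \epsilon)} \cap (F_0 \cup F_1 \cup \Sigma) = \emptyset$. Then for every $\xi$ in this compact disk, any $r \in [0,1]$ with $\xi \in F_r$ lies in the open interval $(0,1)$, and $K_{|\xi^{\perp}}$ is not of constant width. In particular $K_{|\xi^{\perp}}$ is not a Euclidean disk, so its rotational symmetry group is finite cyclic, and for each such $\xi$ the set of admissible rotation values $r$ is finite.

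I would then apply Lemma \ref{lem: Baire} to a countable closed cover of the disk. For each integer $m \geq 2$, define
$$
B_m \;=\; \bigcup_{r\in [1/m,\,1-1/m]} F_r.
$$
Each $B_m$ is closed in $S^2$: a minor extension of Lemma \ref{lem: closed} shows that the graph $\{(\xi,r)\in S^2\times[0,1]:\xi\in F_r\}$ is closed (Rodrigues' formula is jointly continuous in $(\xi,r,\theta)$), and $B_m$ is the image of the intersection of this graph with $S^2 \times [1/m,\,1-1/m]$ under the closed projection $S^2\times[1/m,\,1-1/m]\to S^2$. Since every $\xi$ in the disk has some admissible $r\in(0,1)$, we have $\overline{S(\xi_0,\epsilon)} \subseteq \bigcup_{m\geq 2} B_m$, and Lemma \ref{lem: Baire} provides an $m_0$ and a non-empty open set $U \subseteq B_{m_0}$. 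By iterating, dyadically subdividing $[1/m_0,\,1-1/m_0]$ and invoking Lemma \ref{lem: Baire} within nested open subsets of $U$, I aim to extract a non-empty relatively open set $V \subseteq S^2$ and a single value $r^{\ast}\in(0,1)$ such that $V \subseteq F_{r^{\ast}}$.

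The hard part will be the final step: deriving a contradiction from the resulting uniform identity
$$
h_K(\theta) \;=\; h_L\bigl(R_{\xi,r^{\ast}}(\theta)\bigr), \qquad \xi\in V,\ \theta\in \xi^{\perp}\cap S^2,
$$
in which the same nontrivial rotation $r^{\ast}\pi$ works at every $\xi\in V$. The plan is to exploit this rigidity: for any two $\xi_1,\xi_2\in V$ with a common perpendicular unit vector $\theta$, the identity forces $h_L\bigl(R_{\xi_1,r^{\ast}}(\theta)\bigr) = h_L\bigl(R_{\xi_2,r^{\ast}}(\theta)\bigr)$, so $h_L$ is constant along the arc $\{R_{\xi,r^{\ast}}(\theta):\xi\in V\cap \theta^{\perp}\}$ of a small circle on $S^2$. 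Combining such constraints as $\theta$ varies, together with the evenness of the width function, should eventually force $width_K$ to be constant on $\xi^{\perp}\cap S^2$ for some $\xi\in V$, contradicting $\xi\notin\Sigma$.
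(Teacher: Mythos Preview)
First, a remark on the target: the paper does not actually prove this lemma; it is quoted from Golubyatnikov. The paper does, however, give a full proof of the dual statement (Lemma~\ref{lem: Mod Gol}), explicitly modelled on Golubyatnikov's argument, so that is the natural benchmark.

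Your proposal has two genuine gaps.

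\textbf{(1) The dyadic Baire iteration does not produce an open $V\subseteq F_{r^\ast}$.} At each stage you halve the interval and, by Baire, pass to \emph{some} nonempty open subset of the previous one; nothing prevents these open sets from shrinking to a single point. In the limit you obtain $r^\ast=\bigcap I_n$ and at best one $\xi^\ast\in\bigcap \overline{U_n}$ with $\xi^\ast\in F_{r^\ast}$, which is useless for the rigidity step. Your remark that the admissible $r$'s for each fixed $\xi$ form a finite set is correct but does not help: the symmetry order of $K_{|\xi^\perp}$ need not be bounded on the disk, and the finite coset of admissible $r$'s need not consist of rationals. The device that makes the Baire step a single stroke, both in Golubyatnikov and in the paper's proof of Lemma~\ref{lem: Mod Gol}, is to first eliminate irrational $r$: one proves $width_K\equiv width_L$ on $S^2$ (the width analogue of Lemma~\ref{lem: tau}, obtained from equality of projection perimeters and injectivity of the cosine transform), and then for $\xi\in F_r$ with $r$ irrational the identity $width_K(\theta)=width_L(R_{\xi,r}\theta)=width_K(R_{\xi,r}\theta)$ iterates over a dense orbit to force $\xi\in\Sigma$. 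After that, $F\subseteq\bigcup_{r\in\mathbb{Q}\cap(0,1)}F_r$ is a countable union of closed sets and one invocation of Lemma~\ref{lem: Baire} gives the open $V\subseteq F_{r_0}$ you want.

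\textbf{(2) The final contradiction is not the right one, and is missing its key input.} From $V\subseteq F_{r^\ast}$ you correctly observe that $h_L$ is constant along small-circle arcs $\{R_{\xi,r^\ast}(\theta):\xi\in V\cap\theta^\perp\}$, but there is no bridge from ``$h_L$ constant on arcs'' to ``$width_K$ constant on a great circle''; $h_L$ is not even, and you have not shown $h_K=h_L$. The argument in the paper (for $\tau_{K^*}$) and in Golubyatnikov (for $width_K$) works with the \emph{even} function and crucially uses the identity $width_K\equiv width_L$ to pass back and forth between the bodies. One then builds, for each value $c$ attained by $width_K$ on $\xi^\perp\cap S^2$, an ``X''-shaped union of two transversal arcs on which $width_K\equiv c$; distinct values give disjoint, congruent X-figures, and infinitely many such cannot coexist on $S^2$. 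Your sketch neither establishes $width_K\equiv width_L$ nor carries out this construction, so the contradiction is not reached.

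In short, the missing idea is the global identity $width_K\equiv width_L$ (the analogue of Lemma~\ref{lem: tau}); with it, the irrational case dissolves, Baire becomes a single step, and the ``X-marks'' contradiction goes through as in the paper's proof of Lemma~\ref{lem: Mod Gol}.
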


We will prove a lemma analogous to Lemma \ref{lem: Gol} which is concerned with the dual bodies $K^*$ and $L^*$ of $K$ and $L$. The key is that duality takes projections into sections: $(K_{|\xi^{\perp}})^* =K^* \cap \xi^{\perp}$ for any $\xi \in S^2$ (\cite{Gardner} page 22). This is used in the proof of the following lemma which shows that rotational congruence is inherited by sections of the dual bodies.

\begin{lemma} \label{lem: rotation sections}
For all $r \in [0,1],$ $F_r=\{ \xi \in S^2 : K^* \cap \xi^{\perp}$ rotated by $ r\pi$ is $L^* \cap \xi^{\perp}\}.$
\end{lemma}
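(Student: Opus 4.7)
The plan is to combine two simple facts about the polar dual: first, the identity $(K_{|\xi^\perp})^* = K^* \cap \xi^\perp$ (where the left-hand side is the two-dimensional polar dual taken inside the plane $\xi^\perp$), cited from \cite{Gardner}; and second, the fact that the polar dual is equivariant under any orthogonal rotation that fixes the origin. Taken together, these should translate the statement $\xi \in F_r$ directly into the corresponding rotational-congruence statement for sections of $K^*$ and $L^*$.

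Concretely, fix $\xi \in S^2$ and suppose $\xi \in F_r$, so that $R_{\xi,r}(K_{|\xi^\perp}) = L_{|\xi^\perp}$ as planar convex bodies in $\xi^\perp$. I would take the two-dimensional polar dual of both sides inside $\xi^\perp$. Because the restriction of $R_{\xi,r}$ to $\xi^\perp$ is a linear isometry fixing the origin, a short check using the definition of the polar dual (namely, $x \cdot R_{\xi,r}(y) = R_{\xi,r}^{-1}(x) \cdot y$) shows that $(R_{\xi,r}(A))^* = R_{\xi,r}(A^*)$ for every planar convex body $A \subset \xi^\perp$ containing the origin in its interior. Applying this, together with $(K_{|\xi^\perp})^* = K^* \cap \xi^\perp$ and the analogous identity for $L$, yields
\[ R_{\xi,r}(K^* \cap \xi^\perp) = L^* \cap \xi^\perp, \]
establishing one inclusion.

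The reverse inclusion follows by repeating the same chain of implications with $K^*$ and $L^*$ in place of $K$ and $L$, then invoking the involution $K^{**}=K$ and $L^{**}=L$, which is valid since both bodies contain the origin as an interior point. The only bookkeeping that demands care is the ambient dimension of the polar operation: it must be taken inside the two-dimensional plane $\xi^\perp$, not in $\mathbb{R}^3$, because this is the setting in which the section-projection duality and the equivariance under $R_{\xi,r}$ line up. Beyond this, I do not expect any real obstacle, since the argument is a direct chain of equivalences.
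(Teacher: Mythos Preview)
Your proposal is correct and follows essentially the same route as the paper: both combine the section--projection duality $(K_{|\xi^\perp})^* = K^*\cap\xi^\perp$ with the equivariance of the polar dual under the rotation $\Phi_r$ (the paper phrases this as $(\Phi_r(A))^* = (\Phi_r^{-1})^t(A^*)$ together with $(\Phi_r^{-1})^t=\Phi_r$, which is exactly your inner-product check). If anything, your write-up is slightly more complete, since you explicitly handle the reverse inclusion via the bipolar identity $K^{**}=K$, whereas the paper's proof only spells out the forward direction.
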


\begin{proof}

If $\xi \in F_r$, let $\Phi_r: \xi^{\perp} \mapsto \xi^{\perp}$ be the rotation of $\xi^{\perp}$ about the origin by $r\pi.$ Since $\Phi_r$ is a 1-1 linear transformation  such that $\Phi_r(K_{|\xi^{\perp}})=L_{|\xi^{\perp}}$, we have $(\Phi_r(K_{|\xi^{\perp}}))^*=(L_{|\xi^{\perp}})^*=L^* \cap \xi^{\perp}$. Also, $(\Phi_r(K_{|\xi^{\perp}}))^* = (\Phi^{-1}_r)^t(K_{|\xi^{\perp}})^*$  (\cite{Gardner} page 21) and $(\Phi^{-1}_r)^t=\Phi_r$, which implies that $\Phi_r(K^* \cap \xi^{\perp})=L^* \cap \xi^{\perp}$.

\end{proof}

Define the function $\tau_{K^*}: S^2 \mapsto \mathbb{R}$ by $\tau_{K^*}(\xi)=(\rho^2_{K^*}(\xi)+\rho^2_{K^*}(-\xi))/2$, and define $\tau_{L^*}$ similarly. Lemma \ref{lem: Gol} is a statement about projections and the width function of $K$, whereas he function $\tau_{K^*}$ contains information about the sections of $K^*$. We can define the set $\Lambda \subset S^2$ analogously to $\Sigma$ by $\Lambda = \{ \xi \in S^2 : \tau_{K^*}$ restricted to $ \xi^{\perp} \cap S^2$ is constant$\}.$ Observe that if $\xi \in F_r,$ since the radial function measures distance from the origin, it follows from Lemma \ref{lem: rotation sections} that $\tau_{K^*}(\theta) = \tau_{L^*}(R_{\xi,r}(\theta))$ for every $\theta \in \xi^{\perp} \cap S^2.$

\begin{lemma} \label{lem: tau}
$\tau_{K^*} (\xi)=\tau_{L^*}(\xi)$ for every $\xi \in S^2.$
\end{lemma}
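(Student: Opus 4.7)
The plan is to recognize that the observation preceding the lemma, combined with the rotation invariance of arc-length, places $\tau_{K^*}-\tau_{L^*}$ in the kernel of the spherical Radon transform; Funk's uniqueness theorem then finishes the job.

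By the hypothesis of Theorem \ref{thm: Main}, every $\xi \in S^2$ belongs to some $F_r$ with $r \in [0,1]$, and the observation then gives
$$\tau_{K^*}(\theta) = \tau_{L^*}(R_{\xi,r}(\theta)) \qquad \text{for every } \theta \in \xi^\perp \cap S^2.$$
Now $R_{\xi,r}$ restricts to a Euclidean rotation of the plane $\xi^\perp$, and therefore acts as an isometry of the great circle $C_\xi := \xi^\perp \cap S^2$, preserving arc-length. Integrating the identity above in $\theta$ over $C_\xi$ and changing variables on the right-hand side yields
$$\int_{C_\xi} \tau_{K^*}\, d\sigma = \int_{C_\xi} \tau_{L^*}\, d\sigma$$
for every $\xi \in S^2$. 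Setting $f := \tau_{K^*} - \tau_{L^*}$, this says exactly that the spherical Radon (Funk) transform of $f$ vanishes identically on $S^2$.

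The function $f$ is continuous on $S^2$ (since the radial functions of $K^*$ and $L^*$ are), and it is even, because a direct computation from the definition gives $\tau_{K^*}(-\xi) = (\rho_{K^*}^2(-\xi) + \rho_{K^*}^2(\xi))/2 = \tau_{K^*}(\xi)$, and likewise for $\tau_{L^*}$. The classical Funk uniqueness theorem for even continuous functions on $S^2$ then forces $f \equiv 0$, which is the assertion of the lemma.

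The only nonroutine ingredient in the proof is the appeal to Funk's theorem; all the rest is direct manipulation of the quoted observation. If one wished to avoid the Radon transform altogether, the natural alternative would be to split via Lemma \ref{lem: Gol}: for $\xi \in F_0 \cup F_1$ the observation already gives the pointwise equality $\tau_{K^*}(\theta) = \tau_{L^*}(\theta)$ on $C_\xi$ (using evenness of $\tau$ when $r = 1$), so pointwise equality holds on the union of such great circles. The main obstacle in this approach is a point $\xi_0$ whose entire orthogonal great circle lies in $\Sigma$, a configuration which forces both $K$ and $L$ to be of constant width $M$ and would require extra geometric input to finish; the Radon-transform route sidesteps this dichotomy completely.
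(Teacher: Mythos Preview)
Your proof is correct and follows essentially the same route as the paper's: both show that the spherical Radon transform of the even continuous function $\tau_{K^*}-\tau_{L^*}$ vanishes identically and then invoke Funk's injectivity theorem. The only cosmetic difference is that the paper obtains the integral identity from the equality of areas of the congruent sections $K^*\cap\xi^\perp$ and $L^*\cap\xi^\perp$, whereas you obtain it by integrating the pointwise relation $\tau_{K^*}(\theta)=\tau_{L^*}(R_{\xi,r}(\theta))$ and changing variables under the isometry $R_{\xi,r}$; these are two phrasings of the same computation.
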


\begin{proof}

Since all sections of $K^*$ are congruent to corresponding sections of $L^*$, $\text{area}(K^* \cap \xi^{\perp}) = \text{area}(L^* \cap \xi^{\perp})$ for every unit vector $\xi.$ Since the area of the section can be expressed as $\frac{1}{2} \int_0^{2\pi} \rho^2_{K^* \cap \xi^{\perp}}(\theta) d\theta,$ we can conclude

$$\int_{\xi^{\perp} \cap S^2}\frac{\rho^2_{K^*}(\theta)+\rho^2_{K^*}(-\theta)}{2} d\theta=\int_{\xi^{\perp} \cap S^2}\frac{\rho^2_{L^*}(\theta)+\rho^2_{L^*}(-\theta)}{2} d\theta$$ for every $\xi \in S^2.$ This can be rewritten as

$$\int_{\xi^{\perp} \cap S^2} \tau_{K^*}(\theta)-\tau_{L^*}(\theta) d\theta =0$$ for all unit vectors $\xi$. Thus, the spherical Radon transform of the even function $\tau_{K^*}-\tau_{L^*}$ is identically zero on $S^2$, and so (\cite{Gardner} page 430) implies that $\tau_{K^*}$ and $\tau_{L^*}$ coincide everywhere.

\end{proof}

\begin{lemma} \label{lem: Mod Gol}
Let $K,L \subset \mathbb{R}^3$ be convex bodies containing the origin as an interior point so that for all $\xi \in S^2$, $K_{|\xi^{\perp}}$ can be rotated about the origin into $L_{|\xi^{\perp}}$ . Then $S^2= F_0 \cup F_1 \cup \Lambda$.
\end{lemma}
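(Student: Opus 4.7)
The plan is to mirror Golubyatnikov's argument for Lemma \ref{lem: Gol}, using $\tau_{K^*}$ and $\Lambda$ in place of the width function and $\Sigma$. The first step is to derive the dual basic identity: for $\xi \in F_r$ and $\theta \in \xi^{\perp} \cap S^2$, Lemma \ref{lem: rotation sections} yields $\rho_{L^*}(R_{\xi,r}(\theta)) = \rho_{K^*}(\theta)$; applying this identity at $-\theta$ and using $R_{\xi,r}(-\theta) = -R_{\xi,r}(\theta)$ gives $\rho_{L^*}(-R_{\xi,r}(\theta)) = \rho_{K^*}(-\theta)$. Averaging squares produces $\tau_{L^*}(R_{\xi,r}(\theta)) = \tau_{K^*}(\theta)$, and Lemma \ref{lem: tau} converts this into
$$\tau_{K^*}(R_{\xi,r}(\theta)) = \tau_{K^*}(\theta) \qquad \text{for every } \theta \in \xi^{\perp} \cap S^2.$$
So $\tau_{K^*}$ restricted to $\xi^{\perp} \cap S^2$ is invariant under rotation by $r\pi$. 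I would also verify that $\Lambda$ is closed by the continuity argument used in Lemma \ref{lem: Sigma}.

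I would then argue by contradiction. Suppose $\xi_0 \in S^2 \setminus (F_0 \cup F_1 \cup \Lambda)$; since the three sets are closed, a closed spherical disk $\overline{S(\xi_0, \epsilon)}$ lies in the complement. For every $\xi$ in this disk, $\xi \in F_r$ for some $r \in (0,1)$. If $r$ were irrational, the invariance of $\tau_{K^*}|_{\xi^{\perp} \cap S^2}$ under rotations by both $r\pi$ and $\pi$ (the latter from the evenness of $\tau_{K^*}$) would generate a dense subgroup of $SO(2)$, and continuity would force $\tau_{K^*}|_{\xi^{\perp} \cap S^2}$ to be constant, placing $\xi \in \Lambda$, contrary to assumption. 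Hence $\overline{S(\xi_0, \epsilon)}$ is covered by the countable family $\{F_{p/q} : p/q \in (0,1) \cap \mathbb{Q} \text{ in lowest terms}\}$, and Lemma \ref{lem: Baire} produces an open set $V \subseteq F_{p_0/q_0}$ with $V \cap \Lambda = \emptyset$. Combining invariance by $(p_0/q_0)\pi$ with evenness shows that for every $\xi \in V$, $\tau_{K^*}|_{\xi^{\perp} \cap S^2}$ is invariant under rotation by $\pi/q_0$, yet is non-constant.

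The hard step is to derive a contradiction from this last configuration. I would rewrite the great-circle symmetry as a relation on $S^2$: whenever $\eta, \eta' \in S^2$ satisfy $\eta \cdot \eta' = \cos(\pi/q_0)$ and the unique axis $\xi$ of the $\pi/q_0$-rotation taking $\eta$ to $\eta'$ lies in $V$, then $\tau_{K^*}(\eta) = \tau_{K^*}(\eta')$. Starting from any $\theta_0 \in \xi_1^{\perp} \cap S^2$ with $\xi_1 \in V$ and iterating this relation—at each step choosing an axis from the open arc $V \cap \eta^{\perp}$—produces a growing open subset of $S^2$ on which $\tau_{K^*}$ takes the constant value $\tau_{K^*}(\theta_0)$. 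Exploiting the two-parameter freedom in axis choice from the open set $V$ together with the connectedness of $S^2$, I would argue that this orbit is dense in $S^2$, whence continuity forces $\tau_{K^*}$ to be constant on all of $S^2$. But then every great circle has $\tau_{K^*}$ constant, so $\Lambda = S^2$, contradicting $\xi_0 \notin \Lambda$. The crux is to rigorously show that the constrained orbit is genuinely dense, which is where I expect the most care to be required.
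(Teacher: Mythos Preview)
Your setup, the irrational-angle elimination, and the Baire step all match the paper's argument essentially line for line. The divergence is entirely in the endgame, and that is where your proposal has a genuine gap.

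The paper does \emph{not} try to propagate a single value of $\tau_{K^*}$ over all of $S^2$. Instead, following Golubyatnikov, it fixes the open disk $S(\xi,\epsilon)\subset F_{r_0}$ and, for each value $c$ that $\tau_{K^*}$ attains on $\xi^{\perp}\cap S^2$, builds a small ``X'' figure $X_c=l_c^1\cup l_c^3$ near $\xi^{\perp}$ on which $\tau_{K^*}\equiv c$: one arc comes from sweeping the axis over an arc of $S(\xi,\epsilon)\cap w_c^{\perp}$ and recording the images $R_{v,r_0}(w_c)$, and a second transverse arc is obtained by repeating the construction from a nearby point. Distinct values $c_1\neq c_2$ give disjoint $X_{c_1},X_{c_2}$, all mutually congruent, and since $\tau_{K^*}$ is continuous and nonconstant on the circle there are uncountably many of them. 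The contradiction is then the classical topological fact that one cannot place uncountably many pairwise disjoint congruent crosses (triods) on the sphere.

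Your orbit-density route, by contrast, is not only unproved but, as stated, is false. To move from $\eta$ you need an axis in $V\cap\eta^{\perp}$; when $\eta$ is near $\pm\xi_0$ the great circle $\eta^{\perp}$ lies near the equator of $\xi_0$ and misses the small polar cap $V$ entirely, so such points are isolated under your relation and the orbit from an equatorial $\theta_0$ cannot be dense in $S^2$. You could try to rescue the idea by aiming only for density on $\xi_0^{\perp}\cap S^2$ (which is all you need to force $\xi_0\in\Lambda$), but turning the two-parameter freedom into an honest density statement on that circle is nontrivial and is exactly the work the X-mark construction sidesteps. If you want a self-contained argument, I would recommend adopting the cross-figure construction rather than pursuing the orbit approach.
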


The proof of this lemma closely resembles Golubyatnikov's proof starting on page 17 of \cite{Golubyatnikov}, and will be postponed until the end of the paper. If we assume that this lemma has been proven, we can complete the proof of Theorem \ref{thm: Main}. From Lemma \ref{lem: Mod Gol}  and Lemma \ref{lem: Gol}, we know that if $\xi \in S^2$ is not in $F_0 \cup F_1$, then it must be that $\xi \in \Sigma \cap \Lambda$. The main idea is to use duality to show that if $\xi \in \Sigma \cap \Lambda$, then $\xi \in F_0 \cup F_1$, and therefore $S^2 = F_0 \cup F_1$. Golubyatnikov has  proven that this then implies that $S^2= F_0$ or $S^2=F_1$ (\cite{Golubyatnikov} page 22), which completes the proof of Theorem \ref{thm: Main}. All that remains is to prove the following corollary.

\begin{corollary}

$S^2= F_0 \cup F_1.$

\end{corollary}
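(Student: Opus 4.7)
The plan is to combine Lemma~\ref{lem: Gol} and Lemma~\ref{lem: Mod Gol}: if some $\xi \in S^2$ fails to lie in $F_0 \cup F_1$, then $\xi$ must belong to both $\Sigma$ and $\Lambda$, so it suffices to prove $\Sigma \cap \Lambda \subset F_0$. I fix such a $\xi$ and translate both memberships into relations on the support function $h_K$ restricted to the great circle $\xi^\perp \cap S^2$.

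Since $\xi \in \Sigma$, the projection $K_{|\xi^\perp}$ has the global constant width $M > 0$, i.e.
$$h_K(\theta) + h_K(-\theta) = 2M \quad \text{for every } \theta \in \xi^\perp \cap S^2.$$
The hypothesis that the origin is an interior point of $K$ gives the standard polar identity $\rho_{K^*}(\theta) = 1/h_K(\theta)$, so $\xi \in \Lambda$ rewrites as
$$\frac{1}{h_K^2(\theta)} + \frac{1}{h_K^2(-\theta)} = 2C \quad \text{for every } \theta \in \xi^\perp \cap S^2,$$
for some positive constant $C = C(\xi)$.

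Setting $a = h_K(\theta)$ and $b = h_K(-\theta) = 2M - a$ and clearing denominators in the second relation yields
$$2C\, a^2 (2M - a)^2 \;=\; a^2 + (2M - a)^2,$$
which is a quartic polynomial equation in $a$ whose coefficients depend only on $M$ and $C$ and whose leading coefficient $2C$ is nonzero. It therefore has at most four real roots, so $h_K$ assumes only finitely many values as $\theta$ varies over $\xi^\perp \cap S^2$. By continuity of $h_K$ and the connectedness of the great circle, $h_K$ is constant on $\xi^\perp \cap S^2$; the relation $a + b = 2M$ then forces this constant to equal $M$.

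Hence the support function of $K_{|\xi^\perp}$ is identically $M$ on $\xi^\perp \cap S^2$, so $K_{|\xi^\perp}$ is the disk of radius $M$ centered at the origin. Any rotation of this disk about the origin is itself, so $L_{|\xi^\perp} = K_{|\xi^\perp}$ and $\xi \in F_0$, completing the reduction. The main obstacle I anticipate is verifying that the algebraic reduction genuinely yields a non-degenerate polynomial (so that the at-most-four-roots conclusion is legitimate); once $C > 0$ and $M > 0$ are secured from the hypothesis that the origin is interior, the continuity-and-connectedness step that forces $h_K$ to be constant is essentially automatic.
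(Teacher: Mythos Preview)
Your proposal is correct and follows essentially the same route as the paper: reduce via Lemmas~\ref{lem: Gol} and~\ref{lem: Mod Gol} to showing $\Sigma \cap \Lambda \subset F_0$, combine the constant-width equation $h_K(\theta)+h_K(-\theta)=\text{const}$ with the constant-$\tau_{K^*}$ equation rewritten via $\rho_{K^*}=1/h_K$ into a single quartic in $h_K(\theta)$, and then use continuity on the connected circle $\xi^\perp\cap S^2$ to force $h_K$ constant, hence $K_{|\xi^\perp}$ a disk centered at the origin. Your remark that $C>0$ (ensuring the quartic is genuinely degree four) follows from the origin being interior is a nice touch the paper leaves implicit.
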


\begin{proof}

By Lemma \ref{lem: Gol} and Lemma \ref{lem: Mod Gol}, $S^2 \backslash (F_0 \cup F_1)$ is contained in $\Sigma \cap \Lambda$, so it suffices to prove that $\Sigma \cap \Lambda$ is a subset of $F_0 \cup F_1$. If $\xi \in \Sigma \cap \Lambda,$ Lemma \ref{lem: Gol} implies that there exists a constant $a \in \mathbb{R}$ so that for all $\theta \in \xi^{\perp}$, $$h_K(\theta) + h_K(-\theta)=a.$$ By Lemma \ref{lem: Mod Gol}, there is a constant $b \in \mathbb{R}$ so that for all $\theta \in \xi^{\perp}$, $$\rho_{K^*}^2(\theta) + \rho_{K^*}^2(-\theta)=b.$$ Since $h_K(\theta)=1/\rho_{K^*}(\theta)$ for any unit vector $\theta$ (\cite{Gardner} page 20), the second equation can be rewritten as $$h_K(\theta)^{-2} + h_K(-\theta)^{-2}=b$$ for all $\theta \in \xi^{\perp}$.

Consider the system of equations $x+y=a$ and $ x^{-2}+y^{-2}=b.$ Since the origin is an interior point of $K$, we can assume that neither $x$ nor $y=a-x$ is equal to zero, and thus this system can be expressed as the quartic equation $$(x-a)^2+ x^2=bx^2(a-x)^2,$$ which has at most four real valued solutions in $x$. For each $\theta \in \xi^{\perp},$ $h_K(\theta)$ is a solution to this equation. Since $h_K$ is a continuous function of $\theta,$ the intermediate value theorem implies that $h_K$ is constant on $\xi^{\perp}.$ This implies that both $K_{|\xi^{\perp}}$ and $L_{|\xi^{\perp}}$ are disks, and thus $K_{|\xi^{\perp}}=L_{|\xi^{\perp}}.$ It follows that $\xi \in F_0,$ and therefore $\Sigma \cap \Lambda \subset F_0 \cup F_1$.

\end{proof}

We conclude this paper by returning to the proof of Lemma \ref{lem: Gol}.

\begin{proof} (Lemma \ref{lem: Mod Gol}) 

To prove the lemma, we will show that the set $F = S^2 \backslash (F_0 \cup F_1 \cup \Lambda)$ is empty. An argument similar to the proof of Lemma \ref{lem: Sigma} (just replace $width_K$ with $\tau_{K^*}$) shows that $\Lambda$ is closed, so it follows from Lemma \ref{lem: closed} and Lemma \ref{lem: rotation sections} that $F$ is an open set. Suppose there is a unit vector $\xi \in F$ such that $\xi \in F_r$ for some irrational $r \in (0,1)$. By Lemma \ref{lem: rotation sections} and Lemma \ref{lem: tau}, we can conclude that for every $\theta \in \xi^{\perp}\cap S^2$, 
$$\tau_{K^*}(\theta)=\tau_{L^*}(R_{\xi,r}(\theta))=\tau_{K^*}(R_{\xi,r}(\theta)).$$ Fixing some $\theta_0 \in \xi^{\perp} \cap S^2$, an inductive argument shows that $$\tau_{K^*}(R_{\xi, nr}(\theta_0))=\tau_{K^*}(\theta_0) \forall n \in \mathbb{N}.$$ Since $r$ is irrational, the set $\{ R_{\xi, nr}(\theta_0) : n \in \mathbb{N} \}$ is dense in the circle $\xi^{\perp} \cap S^2$. The function $\tau_{K^*}$ is continuous on $S^2$, and it takes on a single value on a dense subset of $\xi^{\perp} \cap S^2$, so it follows that $\tau_{K^*}$ takes a single value on $\xi^{\perp} \cap S^2$. Therefore $K^* \cap \xi^{\perp}$ is a disk, and thus so is $L^* \cap \xi^{\perp}$, which contradicts the assumption that $\xi \notin \Lambda.$

If $F$ is nonempty and none of the sections orthogonal to elements of $F$ coincide after some irrational angle, then $F$ can be rewritten as $F=\cup_r (F \cap F_r)$ for some subset of the rational numbers contained in $(0,1)$. We claim that there exists some  rational $r_0 \in (0,1)$ with $\text{int}(F \cap F_{r_0}) \neq \emptyset$. To see this, since $F$ is open and assumed nonempty, there exists $\theta \in F$ and $\epsilon > 0$ with $\overline{ S(\theta, \epsilon)} \subset F$. Then $\overline{S(\theta, \epsilon)} = \cup_r \overline{S(\theta, \epsilon)}\cap F_r$, and Lemma \ref{lem: Baire} implies there exists some rational $r_0 \in (0,1)$ with $\emptyset \neq \text{int}(\overline{S(\theta, \epsilon)} \cap F_{r_0}) \subset \text{int}(F \cap F_{r_0})$.

 Fix some $\xi$ and $\epsilon > 0$ with the spherical disk $S(\xi, \epsilon)$ contained in $F \cap F_{r_0}.$ Then the continuous function $\tau_{K^*}$ is not constant along $\xi^{\perp} \cap S^2$. Therefore,  infinitely many values $c$ exist with corresponding unit vectors $w_c \in \xi^{\perp} \cap S^2$ so that $\tau_{K^*}(w_c)=c.$ The rest of the proof will be spent using these values to construct marks on the sphere which are geometrically impossible.

For each value $c = \tau_{K^*}(w_c)$ for some $w_c \in \xi^{\perp} \cap S^2$, denote by $w'_c$ the vector obtained by rotating $w_c$ by $r_0 \pi$ along $\xi^{\perp} \cap S^2.$ We claim there is an open arc $l^1_c \subset S(w_c, r_0)$ containing $w'_c$ with $\tau_{K^*}$ identically equal to $c$ on $l^1_c$. We will then construct another arc $l^3_c$ on $S^2$ which intersects $l^1_c$ at $w'_c$ on which $\tau_{K^*}$ is also constant, and we will define $X_c =l^1_c \cup l^3_c$  (see Figure 1).

\begin{figure}[ht!] 
\centering
\includegraphics[width=50mm]{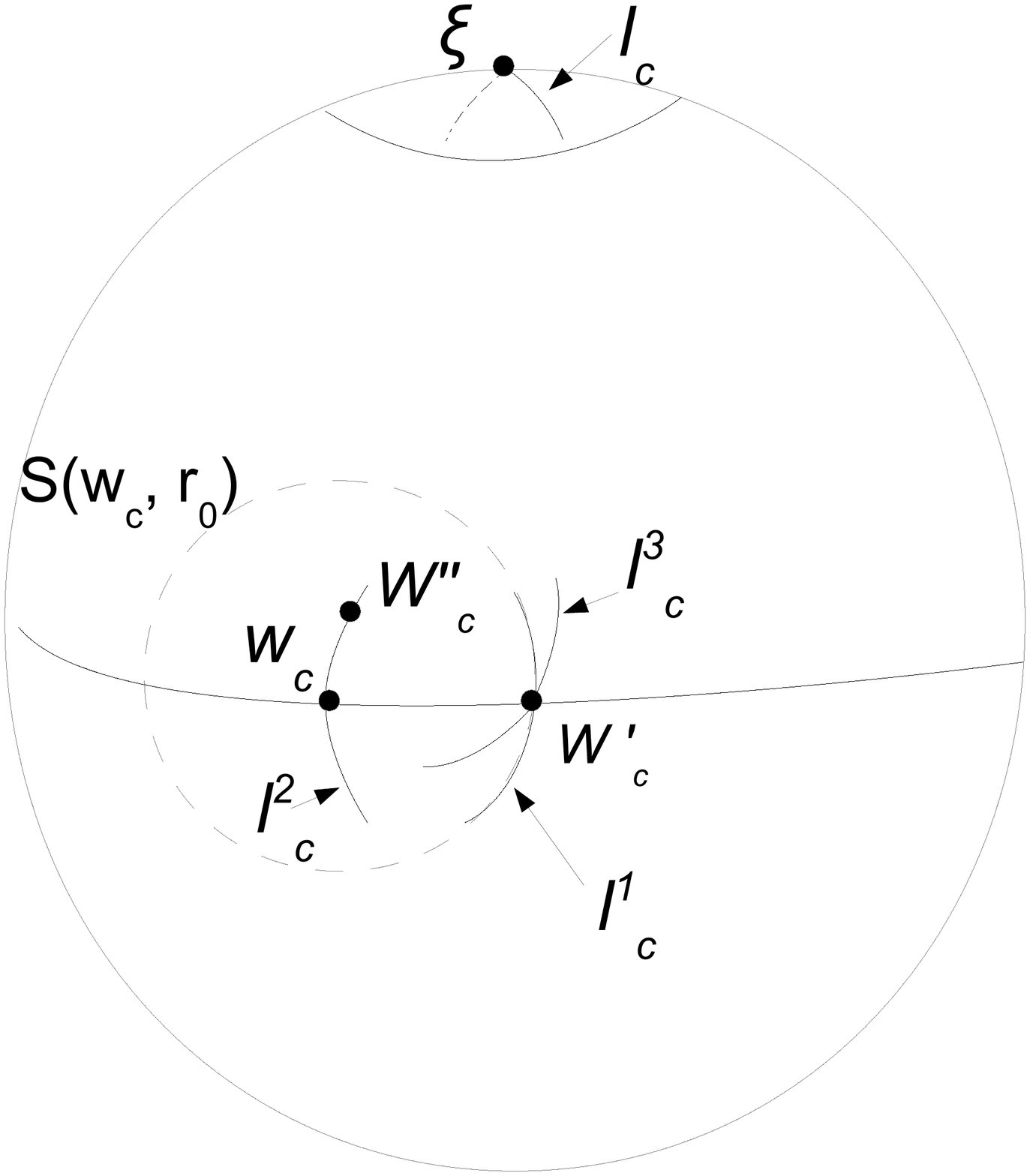}
\caption{The construction of $l_c^1 \cup l_c^3$ for a clockwise rotation by an angle between 0 and $2\pi$ }
\label{overflow}
\end{figure}

Since the spherical disk $S(\xi, \epsilon)$ is contained in $F_{r_0}$ and $\xi \in w_c^{\perp} \cap S^2$, there exists an open arc $l_c \subset w_c^{\perp} \cap S^2$ centered at $\xi$ and contained in $S(\xi, \epsilon) \subset F_{r_0}$. For any $v \in l_c$, $v$ is orthogonal to $w_c$, and so $w_c \in v^{\perp}$, which implies that the great circle $v^{\perp} \cap S^2$ intersects $S(w_c, r_0)$ at the point $R_{v, r_0}(w_c).$ Call $l^1_c = \{ R_{v,r_0}(w_c) : v \in l_c \}$. If $\theta \in l^1_c$ with $\theta = R_{v,r_0}(w_c)$, where $v \in l_c$, the fact that $l_c \subset F_{r_0}$ and Lemma \ref{lem: tau} imply that,

$$c=\tau_{K^*}(w_c)= \tau_{L^*}(R_{v,r_0}(\theta)) =\tau_{L^*}(\theta)=\tau_{K^*}(\theta)$$ which proves the claim.

We now construct the second arc $l^3_c$. If we start at $w'_c$ and rotate the projections of $L^*$ in the reverse direction, by a similar argument applied to $L^*$ we can create $S(w'_c, r_0)$ and an open arc $l^2_c \subset S(w'_c, r_0)$ so that $\tau_{K^*}$ takes only the value $c$ on $l^2_c$. Next, we can pick a third unit vector $w''_c \in l^2_c$ distinct from $w_c$, and consider the circle $S(w''_c, r_0)$. Using an similar argument (create an arc in the spherical disk $S(\xi, \epsilon)$ centered at the preimage of $w''_c$ and consider the image of this arc under the rotation), we can create an arc $l^3_c \subset S(w''_c, r_0)$ on which $\tau_{K^*}$ takes only the value $c$. If we define $X_c= l^1_c \cup l^3_c$ to by the cross mark formed by the two arcs, we see that the function $\tau_{K^*}$ takes only the value $c$ on $X_c$.

This construction can be done identically for every value taken by $\tau_{K^*}$ on $\xi^{\perp} \cap S^2$, so we have constructed an infinite family of congruent ``X" figures on the unit sphere. For distinct values $c_1, c_2$ taken by $\tau_{K^*}$ on $\xi^{\perp} \cap S^2$, it follows from the construction that $X_{c_1}$ and $X_{c_2}$ are disjoint. Since it is impossible to construct infinitely many congruent mutually disjoint ``X" figures on the sphere, this completes the proof.

\end{proof}

\end{document}